\begin{document}

\author{J. Fern\'andez de Bobadilla}
\address{Javier Fern\'andez de Bobadilla:  
(1) IKERBASQUE, Basque Foundation for Science, Maria Diaz de Haro 3, 48013, 
    Bilbao, Basque Country, Spain;
(2) BCAM,  Basque Center for Applied Mathematics, Mazarredo 14, 48009 Bilbao, 
Basque Country, Spain; 
(3) Academic Colaborator at UPV/EHU.} 
\email{jbobadilla@bcamath.org}

\author{G. Pe\~nafort}
\address{G. Pe\~nafort: BCAM,  Basque Center for Applied Mathematics, Mazarredo 14, 48009 Bilbao, 
Basque Country, Spain. } 
\email{gpenafort@bcamath.org}

\author{J. E. Sampaio}
\address{Jos\'e Edson Sampaio: (1) Departamento de Matem\'atica, Universidade Federal do Cear\'a,
	      Rua Campus do Pici, s/n, Bloco 914, Pici, 60440-900, 
	      Fortaleza-CE, Brazil.\newline
		  (2) BCAM - Basque Center for Applied Mathematics,
	      Mazarredo, 14 E48009 Bilbao, Basque Country - Spain. 
}
\email{edsonsampaio@mat.ufc.br}

\title[Milnor fibre of finitely determined map-germs $(\C^2,0)\to(\C^3,0)$]{Topological invariants and Milnor fibre for $\cA$-finite germs $\C^2\to\C^3$}

\thanks{J.F.B. was supported by ERCEA 615655 NMST Consolidator Grant, MINECO by the project 
reference MTM2016-76868-C2-1-P (UCM), by the Basque Government through the BERC 2018-2021 program and Gobierno Vasco Grant IT1094-16, by the Spanish Ministry of Science, Innovation and Universities: BCAM Severo Ochoa accreditation SEV-2017-0718 and by Bolsa Pesquisador Visitante Especial (PVE) - Ciencias sem Fronteiras/CNPq Project number:  401947/2013-0. \\
G.P. and J.E.S. were partially supported by the ERCEA 615655 NMST Consolidator Grant and also by the Basque Government through the BERC 2018-2021 program and Gobierno Vasco Grant IT1094-16, by the Spanish Ministry of Science, Innovation and Universities: BCAM Severo Ochoa accreditation SEV-2017-0718.}

	\begin{abstract}
 This note is the observation that a simple combination of known results shows that the usual analytic invariants of a finitely determined multi-germ $f\colon (\C^2,S)\to(\C^3,0)$ ---namely the image Milnor number $\mu_I$, the number of crosscaps and triple points, $C$ and $T$, and the Milnor number $\mu(\Sigma)$ of the curve of double points in the target--- depend only on the embedded topological type of the image of $f$. As a consequence one obtains the topological invariance of the sign-refined Smale invariant for immersions $j\colon S^3\looparrowright S^5$ associated to finitely determined map germs $(\C^2,0)\to(\C^3,0)$. 

 	\end{abstract}

\maketitle

\section{Introduction}

Let $f\colon (\C^2,S)\to (\C^3,0)$ be an $\cA$-finite multi-germ (for $\cA$-finiteness and related notions, we refer the reader to \cite{MondNuno2019}). We write
\[X=\im f\subseteq (\C^3,0)\]
for the image of $f$.
Associated to $X$, there is the  \emph{Milnor fibre}
\[\F=g^{-1}(\delta)\cap B,\]
where $g=0$ is a reduced equation for $X$, $B$ is a small ball around the origin of $\C^3$ and $\delta$ is nonzero and small enough. 
In the previous version of this work, we related betti numbers $b_i(\F)$ (with coefficients in $\Z_2$) to analytic invariants associated to $\cA$-finite germs, namely the numbers $C, T$ of crosscaps and triple points and the Milnor number $\mu(D)$ of the double point curve in the source of $f$. The main point of finding these relations was that, together with some other relations and the fact that the betti numbers (with any coefficients) of $\F$ are invariants of the embedded topological type of $X$ \cite{Le73}, they implied then that $C, T$, the image Milnor number $\mu_I$ and the Milnor number $\mu(\Sigma)$ of the double point in the target, depend only on the topological type of $X$. As a consequence we proved the topological invariance of the sign-refined Smale invariant for immersions $(\C^2,0)\to(\C^3,0)$ associated with finite map germs.

However, Siersma communicated to us that our expression for $b_1(\F)$ was  incorrect, and pointed us to a paper of van Straten~\cite{vS} we were unaware of, where the correct formula is provided. After replacing our mistaken statement by the correct expression of $b_1(\F)$, the mentioned approach to show the topological invariances becomes hopeless. 

Nevertheless, it turns out that one can combine the Euler characteristic formula (which was obtained before by other authors, as we will mention below), the Marar-Mond formulae, the topological invariance of $C-3T$ (proved by N\'emethi and Pint\'er~\cite{NP}) and a result of L\^e~\cite{Le73} to recover the topological invariances claimed above. 

We shall describe now the $\cA$-invariants of $f$ we are dealing with: The \emph{target double point space} of $f$ is \[\Sigma=\Sing (X).\]
Being a reduced curve with an isolated singularity, $\Sigma$ has a well defined Milnor number \cite{BuchweitzGreuelTheMilnorNumber}.
The \emph{source double point space} \[D=f^{-1}(\Sigma)\] is a disjoint collection of germs of plane curves with isolated singularity \cite{MararMondCorank1}, \cite{Marar2012Double-point-cu}. It is immediate that the Milnor numbers of each of the components of $D$ and $\mu(\Sigma)$ are holomorphic $\cA$-invariants of $f$. We denote by $\mu(D)$ the sum $\sum_{z\in S}\mu(D,z)$. 

 Let $F=(f_s,s)\colon (\C^2\times \C,S)\to (\C^3\times \C,0)$ be a stabilization of $f$, and let $f_s\colon U_s\to \C^3$ be a stable perturbation of $f$. The space
 \[X_s=f_s(U_s)\] is called the \emph{disentanglement} of $X$. The homotopy type of $X_s$ does not depend on the chosen stabilization, and it is that of a wedge of 2-dimensional spheres \cite{Mond:1991}. The number of spheres,
 \[\mu_I=b_2(X_s),\] is a holomorphic $\cA$-invariant of $f$ known as the \emph{image Milnor number of $f$}.
 
  Being the image of a stable mapping, the disentanglement can only exhibit the following three classes of singularities: transverse double points, crosscaps and transverse triple points. Crosscaps and triple points are isolated singularity types and give rise to holomorphic $\cA$-invariants of $f$, namely
\[C=\# \text{ cross-caps in } X_s,\]
\[T=\# \text{ triple points in } X_s.\]

We thank D. Sierma and M. Tibar for their careful reading of the previous version of this paper, during which they spotted the existence of a problem with our computation of $b_1(\F)$, and for pointing us to the papers ~\cite{ST},~\cite{vS}, which we initially missed.  We also thank A. Nemethi for explanations of his work with G. Pint\'er on Smale invariants.

\section{The betti numbers of $\F$ (corrected)}
In Theorem 1.1 of the previous version of this work, we mistakenly claimed that the first betti number  of $\F$, with $\Z_2$ coefficients, behaves as follows:
 \begin{itemize}
  \item [(i)] If $C=0$ then $b_1(\F,\Z_2)=T+1$.
  \item [(ii)] If $C\not=0$ then $b_1(\F,\Z_2)=T$.
 \end{itemize}

To compute $b_1(\F)$, we employed the Homology Splitting method, introduced by Siersma in~\cite{Si1} and~\cite{Si2}. A full explanation of the method, and further related references, can be found in Sierma's survey paper~\cite{Si3}, and in the recent paper by Siersma and Tibar~\cite{ST}, which presents the latest evolution of the method for $1$-dimensional critical set.   
However, Siersma pointed out that $b_1(\F)$ had been computed previously by Van Straten and the result differed from ours. A mistake, related to the local model around triple points, was then spotted  in our computations. The correct statement, a particular case of~\cite{vS}, is as follows:

\begin{prop}Let $f\colon (\C^2,S)\to (\C^3,0)$ be an $\cA$-finite multi-germ. Then 
\[b_1(\F)=\vert S\vert-1.\]
\end{prop}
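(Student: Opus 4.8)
The plan is to view $\F$ as the Milnor fibre of the non-isolated hypersurface singularity $(X,0)=(g^{-1}(0),0)\subset(\C^3,0)$, whose critical locus $\Sigma=\Sing(X)$ is a reduced curve, and to compute $H_1(\F)$ by adapting Siersma's Homology Splitting method to this one-dimensional critical set, which is the setting of~\cite{vS}. The starting point is that, because $f$ is $\cA$-finite, the germ of $X$ at a generic point of each local branch $\Sigma_1,\dots,\Sigma_r$ of $\Sigma$ is $\cA$-equivalent to the product of a line with an ordinary double point $\{uv=0\}\subset\C^2$; hence the transversal Milnor fibre of $X$ along \emph{every} branch of $\Sigma$ is the cylinder $\{uv=\delta\}\simeq S^1$, and the transversal monodromy around $\Sigma_j$ acts on its first homology $\Z$ either trivially or by $-1$, according to whether $\Sigma_j$ is covered one-to-one by two branches of $D=f^{-1}(\Sigma)$ or two-to-one by a single branch.

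The next step is to split $\F=\F_\Sigma\cup\F^\circ$, where $\F_\Sigma=\F\cap T$ for a suitably small tube $T$ around $\Sigma$ and $\F^\circ=\F\setminus\mathring{T}$, the overlap being a collar of $\F\cap\partial T$. Off $\Sigma$ the function $g$ is a submersion, so by Ehresmann $\F^\circ\cong(X\cap B)\setminus T$; pulling this back along the normalisation $\bigsqcup_{z\in S}(\C^2,0)\to X$ identifies it with the disjoint union over $z\in S$ of the complements of the plane-curve germs $D_z$ constituting $D$, so $\F^\circ$ is homotopy equivalent to a disjoint union of $|S|$ such complements, each connected and with first homology free of rank the number of branches of $D_z$. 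The complementary piece $\F_\Sigma$ retracts onto a space fibred over the tube, with fibre the transversal cylinder $\simeq S^1$ away from the origin, so it is assembled from the corresponding $S^1$-bundles over the punctured branches $\Sigma_j\setminus\{0\}$, glued together over a neighbourhood of $0$. One then runs the Mayer--Vietoris sequence of this splitting: on $H_0$ the $|S|$ components of $\F^\circ$ fuse into the single component of $\F$, which determines the image of the connecting map $H_1(\F)\to\widetilde H_0(\F\cap\partial T)$, and on $H_1$ one matches the curve-complement classes against the transversal circles. The expected outcome is that every one-cycle other than the ones forced by joining the $|S|$ components of $\F^\circ$ cancels, leaving $b_1(\F)=|S|-1$.

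The main obstacle --- and the reason this is cited rather than reproved here --- is the homotopy type of $\F_\Sigma$ near the origin, together with the attaching maps in the splitting sequence. This requires the precise local model of $\F$ along $\Sigma$ about $0$, where $\Sigma$ may itself be singular (three concurrent lines when $0$ is a triple point of $X$, or a more complicated curve for higher-multiplicity germs) and where the interplay of the transversal monodromies on the various branches with the local topology of $\F$ has to be tracked exactly. It was precisely in this local model around triple points that our earlier computation of $b_1(\F)$ went astray; van Straten~\cite{vS} carries the bookkeeping out correctly, and it yields $b_1(\F)=|S|-1$.
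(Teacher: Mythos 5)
Your proposal is in substance the same as the paper's treatment: the paper offers no proof of this proposition at all, presenting it as a particular case of van Straten's result~\cite{vS}, and your argument likewise delegates the decisive step --- the local structure of $\F$ along $\Sigma$ near the origin (in particular at triple points) and the attaching data in the Mayer--Vietoris splitting --- to~\cite{vS}. The Siersma-style homology-splitting sketch you add (transversal $A_1$ fibres with monodromy $\pm 1$, and $\F^{\circ}$ identified via the normalisation with the $\vert S\vert$ plane-curve complements) is consistent and plausible, but, as you yourself note, it is not a self-contained proof, which is exactly the paper's own position.
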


Having found $b_1(\F)$, calculating $b_2(\F)$ amounts to computing the Euler characteristic of $\F$. This has been previously achieved by Siersma and Massey, see~\cite{Si4}, Section~4 and~\cite{MaSi}~Section~4. It can also be deduced from T. de Jong's formula for the Euler characteristic of the Milnor fibre (see~\cite{dJ2}), and formulae from de Jong, Pellikaan, Marar and Mond for multi-germs (see Lemma~\ref{Marar-MondAndMuIForMultigerms}). It also can be deduced from~\cite{ST}, Proposition~3.3.  

In what follows, by saying that $f$ is non regular we mean that $f$ is not a mono-germ of embedding.

\begin{prop}\label{propEulerCharFormula}[Siersma, Massey-Siersma] Let $f\colon (\C^2,S)\to(\C^3,0)$ be an $\cA$-finite non regular multi-germ. Then 
\[\chi(\F)=\mu(D)+2C-3T.\]
\end{prop}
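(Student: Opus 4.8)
The plan is to transport the computation to the disentanglement $X_s$, evaluate the Euler characteristic there by additivity, and then read off the answer from the geometry of the double point curve. A stabilization of $f$ induces a one-parameter deformation $g_t$ of $g$ with $V(g_t)=\im f_t$, so that $V(g_s)=X_s$ for the stabilization parameter; since $0$ is the only critical value of $g$ in $B$, a small nonzero $\delta$ is a regular value of $g$, and for $s$ small it is also a regular value of $g_s$ with $g_s^{-1}(\delta)\cap B$ diffeomorphic to $g^{-1}(\delta)\cap B=\F$. I would therefore compute $\chi(\F)=\chi(g_s^{-1}(\delta)\cap B)$, the Euler characteristic of a smoothing of $X_s$.

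This I would do by additivity of the Euler characteristic ---equivalently, by evaluating the nearby cycles of $g_s$ stratum by stratum--- along the stratification of $X_s\cap B$ into its smooth locus $X_s^{\mathrm{reg}}$, the locus $\Sigma'$ of transverse double points (that is, $\Sing(X_s)$ with the crosscaps and triple points removed), the $C$ crosscaps, and the $T$ triple points, obtaining
\[\chi(\F)=\chi(X_s^{\mathrm{reg}})\,\chi(F_{\mathrm{reg}})+\chi(\Sigma')\,\chi(F_{\Sigma})+C\,\chi(F_{cc})+T\,\chi(F_{tp}),\]
where $F_{\bullet}$ denotes the local Milnor fibre of $g_s$ at a point of the corresponding stratum. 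All four are elementary: $F_{\mathrm{reg}}$ is a ball, so $\chi(F_{\mathrm{reg}})=1$; along $\Sigma'$ the transversal type is an ordinary node, so $F_{\Sigma}\simeq\C^{*}$ and $\chi(F_{\Sigma})=0$; at a triple point $g_s$ equals $xyz$ in suitable coordinates, with Milnor fibre $(\C^{*})^{2}$, so $\chi(F_{tp})=0$; and at a crosscap $g_s$ equals $z^{2}-x^{2}y$, which is weighted homogeneous, so its Milnor fibre is $\{z^{2}-x^{2}y=1\}$, and projection to the $(x,z)$-plane exhibits this as the union of a copy of $\C^{*}\times\C$ with two copies of $\C$, giving $\chi(F_{cc})=2$. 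Substituting, $\chi(\F)=\chi(X_s^{\mathrm{reg}})+2C$, so it remains to prove $\chi(X_s^{\mathrm{reg}})=\mu(D)-3T$.

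Here I would use that $f_s$ exhibits the source $U_s$ as the normalization of $X_s$ and restricts to an isomorphism $U_s\setminus D_s\xrightarrow{\sim}X_s^{\mathrm{reg}}$, where $D_s=f_s^{-1}(\Sing X_s)$ is the source double point curve; since $U_s$ is a disjoint union of $\vert S\vert$ contractible pieces, additivity gives $\chi(X_s^{\mathrm{reg}})=\vert S\vert-\chi(D_s)$. Now $D_s$ is a deformation, inside the Milnor ball, of the plane curve multi-germ $D=f^{-1}(\Sigma)$, and by the Marar--Mond description of the source double point curve of a stable map (compare Lemma~\ref{Marar-MondAndMuIForMultigerms}) it has exactly $3T$ ordinary nodes ---one at each of the three source preimages of every triple point--- and no other singular point. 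Comparing $D_s$ with a generic smoothing of $D$, which is a disjoint union of $\vert S\vert$ connected surfaces of total Euler characteristic $\vert S\vert-\mu(D)$ (here the non-regularity of $f$ is used, to ensure $D$ is nonempty at every point of $S$), and using that passing from a smoothed annulus to a node raises the Euler characteristic by $1$, one obtains $\chi(D_s)=\vert S\vert-\mu(D)+3T$. Hence $\chi(X_s^{\mathrm{reg}})=\mu(D)-3T$ and $\chi(\F)=\mu(D)+2C-3T$.

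The four local Milnor fibre computations and the comparison of the Milnor data of $X$ and of $X_s$ in the first step are routine; the part needing genuine care is the analysis of $D_s$ ---identifying it as a deformation of $D$ carrying precisely $3T$ nodes and nothing else, and relating the Euler characteristic of this non-generic deformation to $\mu(D)$--- which is also where the non-regularity hypothesis enters. An alternative, which bypasses the stratification argument, is to substitute the Marar--Mond and de Jong--Pellikaan identities of Lemma~\ref{Marar-MondAndMuIForMultigerms} into T. de Jong's formula for the Euler characteristic of the Milnor fibre of a non-isolated surface singularity in $\C^{3}$, or into~\cite{ST}, Proposition~3.3.
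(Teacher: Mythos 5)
Your argument is correct, but it is genuinely more than what the paper offers: the paper does not prove Proposition~\ref{propEulerCharFormula} at all, it attributes the formula to Siersma and Massey--Siersma and merely indicates that it can be deduced from de Jong's formula or from~\cite{ST}, Proposition~3.3. What you have written is in effect a self-contained version of that deformation method: replace $\F$ by a regular fibre of the deformed equation $g_s$ of the disentanglement, evaluate it by stratified additivity of nearby cycles along $X_s$ with the three local models $xy$, $xyz$, $z^2-x^2y$, and then convert $\chi(X_s^{\mathrm{reg}})$ into source data via the normalization $f_s\colon U_s\to X_s$; note that your identities $\chi(U_s)=\vert S\vert$ and $\chi(D_s)=\vert S\vert-\mu(D)+3T$ are exactly the ones the paper itself invokes, without proof, inside the proof of Lemma~\ref{Marar-MondAndMuIForMultigerms}, so the two texts share their combinatorial core and your contribution is the missing topological front end. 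Two steps deserve explicit justification or a citation. First, the existence of a defining equation $G$ of $\im F$ with $V(g_t)=\im f_t$ and $g_0=g$ reduced is the Mond--Pellikaan Fitting-ideal result and should be referenced. Second, the identity $\chi(g_s^{-1}(\delta)\cap B)=\sum_\alpha\chi(S_\alpha)\chi(F_\alpha)$ is not purely formal: $\delta$ is fixed before $s$, so one must observe that $g_s\vert_B$ has only finitely many critical values, all of modulus much smaller than $\vert\delta\vert$, that all fibres over the disc of radius $\vert\delta\vert$ stay transverse to $\partial B$ for small $s$, and that one can therefore slide $\delta$ to a value small with respect to $g_s$ through regular values, after which the nearby-cycle computation on $X_s\cap B$ applies. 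In particular, be aware that $g_s$ \emph{does} in general have critical points off $X_s$ inside $B$ (their Milnor numbers add up to $\mu_I$); they are invisible to the Euler characteristic of the generic fibre, which is why your formula closes, but they would have to be accounted for if one tried to push the same argument to Betti numbers --- this is precisely the phenomenon behind the corrected $b_1(\F)$ discussed in Section~2. With those two remarks added, your proof is complete and matches the value the statement requires, including the correct use of non-regularity to guarantee that every branch of $(D,S)$ is nonempty.
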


The formula does not coincide literally with the expressions stated in~\cite{Si4}~\cite{MaSi}, but it is equivalent to them using Marar-Mond formulae stated below.

\section{Topological invariants}

A key element for the proofs of topological invariances  is contained in work of A. N\'emethi and G. Pint\'er ~\cite{NP} where, among other things, they prove the following: If a finite \emph{mono-germ} has $C<\infty$, then $C$ coincides with the sign-refined Smale invariant of the associated link embedding. As consequences of this, they obtain the $C^{\infty}$ $\cA$-invariance of the number of cross-caps, and they prove the following result:

\begin{prop}\label{propC-3TIsATopInv}If $f\colon (\C^2,0)\to(\C^3,0)$ is an $\cA$-finite mono-germ, then $C-3T$  is invariant by topological $\cA$-equivalence.
\end{prop}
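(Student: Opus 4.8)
The plan is to combine the N\'emethi--Pint\'er identification of $C$ with the sign-refined Smale invariant of the link immersion, together with an index-theoretic computation of that invariant from the disentanglement. First I would dispose of the trivial case: if $f$ is a mono-germ of embedding, then $C=T=0$, and since the image of a non-regular $\cA$-finite germ fails to be a topological manifold near $0$ (along the double-point curve it carries at least two local branches), being a germ of embedding is itself a topological $\cA$-invariant property; so $C-3T=0$ identically here. Assume from now on that $f$ is non-regular, and write $j:=f|_{S^3}\colon S^3\looparrowright S^5$ for the link immersion; since $f$ is $\cA$-finite its only instability sits at $0$, so $j$ is a genuine immersion with normal-crossing self-intersection and $C<\infty$. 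By Proposition~\ref{propC-3TIsATopInv}'s source~\cite{NP}, $C=\Omega(j)$, the sign-refined Smale invariant of $j$.

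Next I would record how $\Omega(j)$ is constrained by the embedded topological type of $X=\im f$. The map $j$ is the branch-separating normalization of the link $L=X\cap S^5$ (recall that, $f$ being $\cA$-finite, $f$ \emph{is} the normalization of $X$; away from $0$ the link $L$ carries only a transverse double-point locus, so this normalization is purely topological); hence $j$ is determined, up to homeomorphisms of $S^3$ and $S^5$, by the embedded topological type of $X$. Concretely, a topological $\cA$-equivalence $\psi\circ f=g\circ\varphi$ descends on links to homeomorphisms $\Phi$ of $S^3$ and $\Psi$ of $S^5$ with $j_g\circ\Phi=\Psi\circ j_f$. Since $\Theta_3=\Theta_5=0$, both $\Phi$ and $\Psi$ are isotopic to diffeomorphisms, and were one allowed to perform these replacements inside that relation, invariance of $\Omega(j)$ --- hence of $C$ --- would follow at once. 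The replacement is \emph{not} legitimate: along its double-point link $j_f(S^3)$ is not locally flat, so smoothing $\Psi\circ j_f$ need not reproduce the regular homotopy class of $j_g$.

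To isolate what \emph{is} invariant I would compute $\Omega(j)$ from the filling supplied by the disentanglement: composing the stable perturbation $f_s\colon U_s\to\C^3$ with a generic projection of $\C^3$ onto a real hyperplane yields a generic map of the $4$-ball $U_s$ with boundary $j$ --- a singular Seifert surface for $j$ in the sense of Ekholm and Sz\H ucs. An index formula of Ekholm--Sz\H ucs type then writes $\Omega(j)$ as a sum of local contributions: a term built from the signature and normal Euler data of the filling, which vanishes because $U_s$ is a ball; contributions of the singular strata created by the projection, which are governed by the embedded topological type of the double-point link $\Sigma\cap S^5$ and of its preimage in $S^3$; and a term equal to $3T$, collecting the contributions of the $T$ triple points of $X_s$. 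Since $C=\Omega(j)$, this gives $C-3T=I(X)$ with $I(X)$ visibly a topological $\cA$-invariant, which is the claim. The main obstacle is precisely this last step: installing the Ekholm--Sz\H ucs index formula in the present non-isolated, non-locally-flat situation, keeping track of the singular strata introduced by the auxiliary projection, and verifying that the net triple-point contribution is exactly $3T$ while the remaining terms depend only on the embedded topology of $X$. (As a partial check: $\mu(D)$ is itself a topological $\cA$-invariant, since the plane curve $D$ is determined up to topological type by its link $D\cap S^3\subset S^3$, which is read off from $X$ via the normalization; so, using Proposition~\ref{propEulerCharFormula}, the identity $b_1(\F)=|S|-1=0$ for a mono-germ, and the topological invariance of the Betti numbers of $\F$ \cite{Le73}, one also gets that $2C-3T$ is a topological $\cA$-invariant.)
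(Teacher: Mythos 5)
There is a genuine gap, and you identify it yourself: the entire content of the proposition is concentrated in the step you defer. After invoking N\'emethi--Pint\'er for the identification $C=\Omega(j)$, your plan is to compute $\Omega(j)$ from a singular Seifert-type filling coming from the stable perturbation and to assert a decomposition $\Omega(j)=I(X)+3T$ in which the triple-point contribution is exactly $3T$ and every other local contribution ``is governed by the embedded topological type'' of the link of $X$. Neither half of this assertion is established. The non-triple-point terms come from singular strata in the \emph{interior} of the filling (the cross-cap/umbrella points of $f_s$ and whatever strata the auxiliary projection creates), so there is no a priori reason they should be determined by boundary data: indeed $C$ and $T$ separately are \emph{not} topological invariants, and deciding which combination survives is precisely what an actual index formula must deliver. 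Verifying an Ekholm--Sz\H ucs-type formula in this non-locally-flat, non-isolated setting, with the correct coefficient $3$ on the triple points and with the residual term expressed purely in terms of the link of $X$, is the proof of the proposition, not a technical afterthought; as written, the argument assumes its conclusion at that point. (There are also smaller infelicities in the setup: composing $f_s$ with a projection to a real hyperplane produces a map whose boundary is the \emph{projection} of $j$, not $j$ itself, so the filling as described does not bound the immersion whose Smale invariant you want; and the vanishing of the signature/normal-Euler term needs the precise formula before it can be asserted.)

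For comparison, the paper does not prove this statement at all: it is quoted from N\'emethi--Pint\'er~\cite{NP}, where the Smale-invariant identification and the topological invariance of $C-3T$ are established. So your proposal is best read as an attempt to reconstruct the argument of~\cite{NP}, correct in outline (the trivial embedded case, the reduction to the Smale invariant, the non-locally-flat obstruction to naively smoothing the topological equivalence are all sound observations), but incomplete exactly at the index-theoretic core. Your parenthetical check is fine but weaker than the claim: combining Proposition~\ref{propEulerCharFormula}, $b_1(\F)=0$ for a mono-germ, the invariance of $\mu(D)$ and L\^e's theorem~\cite{Le73} gives the topological invariance of $2C-3T$, which is the ingredient the paper itself extracts in the proof of Theorem~\ref{thmTopInv}; it does not yield $C-3T$, and it is only the conjunction of the two that separates $C$ and $T$.
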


Apart from the results of N\'emethi and Pint\'er, we use the following adaptations for multi-germs of formulae for mono-germs due to Marar and Mond  \cite{MararMondCorank1,Mond:1991} (a previous unpublished mono-germ version of the first formula is attributed to de Jong and Pellikaan \cite{MararMondCorank1}). The first two formulae are used in the proof of Theorem~\ref{thmTopInv}, the third and fourth are included for completeness.

\begin{lem}\label{Marar-MondAndMuIForMultigerms}
Let $f\colon (\C^2,S)\to(\C^3,0)$ be a non-regular $\cA$-finite multi-germ and let $r=\vert S\vert$. Then
\[\mu_I=\mu(D)-\mu(\Sigma)-T,\]
\[\mu(\Sigma)=\frac{1}{2}(\mu(D)-C+2T-r+2).\]
Let $\sigma$ be the number of singular branches of the image of $f$. Let $D^2$ be the closure of the set  $\{(x,x')\in\C^2\times\C^2\mid x\neq x',f(x)=f(x')\}$ and let $D^2/S_2$ be the quotient complex space obtained by identifying $(x,x')$ and $(x',x)$. Then
\[\mu(D^2)=\mu(D)-6T+r(r-2)+\sigma,\]
\[\mu(D^2/S_2)=\mu(\Sigma)-4T+\frac{r(r-1)}{2}+\sigma-1.\]
\begin{proof}We include only the proof of the first formula; the proofs for the remaining ones have similar flavor and can be easily adapted from Marar and Mond original formulae  \cite{MararMondCorank1}.
With notations as in the proof of the Euler characteristic formula $\chi(\F)=\mu(D)+2C-3T$, we have that $\mu_I=\chi(X_s)-1=\chi(X_s\setminus \Sigma_s)+\chi(\Sigma_s)-1=\chi(U_s )-\chi(D_s)+\chi(\Sigma_s)-1$. Now the claim follows from the equalities $\chi(U_s )=r$, $\chi(D_s)=r-\mu(D)+3T$ and $\chi(\Sigma_s)=1-\mu(\Sigma)+2T$.
\end{proof}
\end{lem}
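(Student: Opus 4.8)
The plan is to run, for each of the four identities, the mechanism already used above for $\mu_I$: replace $f$ by a stabilization, read the identity off the geometry of the stable perturbation $f_s$, and trade Milnor numbers for Euler characteristics. Since $f_s$ is a generic stable perturbation, the spaces $D_s$, $\Sigma_s$, $D^2_s$ and $(D^2/S_2)_s$ carry only the local models of a stable map $\C^2\to\C^3$ (transverse double points, crosscaps, transverse triple points), so I would compute each Euler characteristic by additivity over the stratification given by the $C$ crosscap points, the $T$ triple points and their complement, and then pass back to Milnor numbers via $\chi(\text{smoothing of a reduced curve germ})=\#\{\text{branches}\}-\mu$. This is exactly what produces the three facts $\chi(U_s)=r$, $\chi(D_s)=r-\mu(D)+3T$, $\chi(\Sigma_s)=1-\mu(\Sigma)+2T$ used in the proof of the first formula: over each triple point of $X_s$ the curve $D_s$ acquires three nodes and $\Sigma_s$ one ordinary space-triple-point, whose smoothings carry Milnor numbers $3$ and $2$, while near a crosscap both $D_s$ and $\Sigma_s$ are smooth. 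The only genuinely new input for the remaining three formulae is a careful description of how the curves involved split into pieces attached to single branches and to pairs of branches of $f$.

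For the second formula I would use the degree-two map $f_s|\colon D_s\to\Sigma_s$: it is an unramified double cover over $\Sigma_s$ minus the $C$ crosscap points and the $T$ triple points, it is simply ramified over each crosscap point, and over each triple point it maps the three nodes of $D_s$ onto the single ordinary triple point of $\Sigma_s$. Additivity of $\chi$ through this map gives $\chi(D_s)=2\chi(\Sigma_s)-C+T$; substituting the expressions above and solving for $\mu(\Sigma)$ yields $\mu(\Sigma)=\frac{1}{2}(\mu(D)-C+2T-r+2)$. (This is also obtainable from T.~de Jong's Euler-characteristic formula~\cite{dJ2} together with the formulae referred to after Proposition~\ref{propEulerCharFormula}.)

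For the third and fourth formulae the new ingredient is the decomposition
\[
D^2=\Big(\bigsqcup_{i} D^2(f_i)\Big)\sqcup\Big(\bigsqcup_{i\neq j} D(f_i,f_j)\Big),
\]
of the double point space into the self double point spaces $D^2(f_i)$ of the $r$ branches $f_1,\dots,f_r$ of $f$ and the mutual ones $D(f_i,f_j)=\{(x,x')\mid f_i(x)=f_j(x')\}$, and likewise for $D^2/S_2$ after identifying $D(f_i,f_j)$ with $D(f_j,f_i)$ and quotienting each $D^2(f_i)$ by its involution. Feeding $D^2_s$ and $(D^2/S_2)_s$ into the stabilize--stratify--count scheme --- their singularities are once more only nodes, $6T$ of them on $D^2_s$ and $4T$ on $(D^2/S_2)_s$, suitably distributed over the triple points --- and bookkeeping the number of pieces in terms of $r$ (the $r(r-1)$ ordered, resp.\ $\binom{r}{2}$ unordered, mutual pieces produce the $r(r-2)$, resp.\ $\tfrac{r(r-1)}{2}$, terms; the $\sigma$ comes from the singular branches of $\im f$; the residual $-r$, resp.\ $-1$, records the discrepancy between the number of branches of $D^2$, resp.\ $D^2/S_2$, and that of $D$, resp.\ $\Sigma$) and converting via $\chi=\#\{\text{branches}\}-\mu$ gives the last two identities. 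As a check, for a mono-germ $r=1$ and $\sigma=1$, so $r(r-2)+\sigma=0$ and $\tfrac{r(r-1)}{2}+\sigma-1=0$, recovering the classical Marar--Mond expressions.

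The hard part is exactly this combinatorial bookkeeping at the triple points: one must enumerate correctly how, at a transverse triple point, the pieces of $D$, $\Sigma$, $D^2$ and $D^2/S_2$ belonging to the three branches and to their three pairs fit together, and which nodes survive in the stable models --- precisely the sort of triple-point local analysis whose mishandling produced the error corrected in Section~2. One should also check along the way that $D^2/S_2$ is a reduced curve with isolated singularity, so that $\mu(D^2/S_2)$ is defined and the $\chi$--$\mu$ dictionary applies; this is part of the Marar--Mond package~\cite{MararMondCorank1}. Once these points are settled the remaining verifications are a routine transcription of the mono-germ computations of~\cite{MararMondCorank1} with $r$ branches in place of one.
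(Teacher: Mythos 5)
Your proof of the first formula coincides with the paper's own argument, and your derivation of the second one correctly fills in what the paper leaves to the reader: the relation $\chi(D_s)=2\chi(\Sigma_s)-C+T$, read off from the degree-two map $D_s\to\Sigma_s$ (simply ramified at the $C$ crosscap points, with fibre of cardinality $3$ over each triple point), combined with $\chi(D_s)=r-\mu(D)+3T$ and $\chi(\Sigma_s)=1-\mu(\Sigma)+2T$, does give $\mu(\Sigma)=\frac{1}{2}(\mu(D)-C+2T-r+2)$. One slip in your set-up: the dictionary is $\chi(\text{smoothing})=\#\{\text{base points of the multi-germ}\}-\mu$, i.e.\ $1-\mu$ at each point (Buchweitz--Greuel), not $\#\{\text{branches}\}-\mu$; you use the correct values of $\chi(U_s)$, $\chi(D_s)$, $\chi(\Sigma_s)$ anyway, but the rule as you state it would fail as soon as $D$ or $\Sigma$ is reducible, and it is the corrected form that makes the base-point counts $r(r-1)+\sigma$ and $\frac{r(r-1)}{2}+\sigma$ for $D^2$ and $D^2/S_2$ enter the last two formulae.

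The genuine gap is in the third and fourth formulae: your key geometric claim, that $D^2_s$ carries $6T$ nodes and $(D^2/S_2)_s$ carries $4T$ nodes over the triple points, is false. For a stable perturbation the double point space $D^2(f_s)$ is \emph{smooth} (Marar--Mond smoothness of multiple point spaces of stable maps; it is also visible in the local models: near a pair $(p_i,p_j)$ of distinct preimages of a triple point, $D^2(f_s)$ is isomorphic to the line along which the two corresponding sheets of $X_s$ meet, and near a crosscap it is the smooth curve of pairs $((0,v),(0,-v))$), and its $S_2$-quotient is again smooth, since at the only fixed points the involution is $v\mapsto -v$. You have transplanted to $D^2_s$ the nodes that actually live on $D_s$. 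The correct bookkeeping is: $D^2_s$ and $(D^2/S_2)_s$ are smoothings of the multi-germs $D^2$ and $D^2/S_2$, which have $r(r-1)+\sigma$ and $\frac{r(r-1)}{2}+\sigma$ base points; their Euler characteristics are computed from the projections $D^2_s\to D_s$, $(x,x')\mapsto x$, and $(D^2/S_2)_s\to\Sigma_s$, $\{x,x'\}\mapsto f_s(x)$, which are bijective except over the triple points, where the fibres have $2$, resp.\ $3$, points; hence $\chi(D^2_s)=\chi(D_s)+3T=r-\mu(D)+6T$ and $\chi((D^2/S_2)_s)=\chi(\Sigma_s)+2T=1-\mu(\Sigma)+4T$, and subtracting these from the numbers of base points gives exactly $\mu(D^2)=\mu(D)-6T+r(r-2)+\sigma$ and $\mu(D^2/S_2)=\mu(\Sigma)-4T+\frac{r(r-1)}{2}+\sigma-1$. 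So your skeleton (stabilize, compute $\chi$, convert to $\mu$) is the right one, but the step producing the coefficients $6T$ and $4T$ must be replaced as above.
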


\begin{thm}\label{thmTopInv}
The numbers $\mu_I$, $C$, $T$, $\mu(\Sigma)$ and the collection of $\mu(D,z), z\in S$ depend only on the embedded topological type of $X$. In particular, they are topological $\cA$-invariants of $f$.
\end{thm}

\begin{proof}
Let $f\colon (\C^2,S)\to (\C^3,0)$, $\tilde f\colon (\C^2,\tilde S)\to (\C^3,0)$ be two $\cA$-finite mappings, and let $(X,0)$ and $(\tilde X,0)$ be their images. Assume that there exits a homeomorphism $\psi\colon (\C^3,0)\to (\C^3,0)$ taking $(X,0)$ to $(\tilde X,0)$. Denote by $\Sigma$, $D$, $\tilde \Sigma$ and $\tilde D$ the double point loci of at the target and source of $f$ and $\tilde f$, respectively. The local homeomorphism type of $(X,0)$ at a general point of $\Sigma$ (the union of two planes meeting in a line) is different from the local homeomorphism type of a point of $\tilde X\setminus\tilde \Sigma$, hence we have that $\psi(\Sigma)=\tilde \Sigma$. Observe that $f$ and $\tilde f$ are the normalization mappings of $(X,0)$ and $(\tilde X,0)$. By uniqueness of the topological normalization, there exists a homeomorphism $\phi\colon (\C^2,S)\to (\C^2,\tilde S)$ lifting $\psi$. In other words, the germs $f$ and $\tilde f$ are topologically $\cA$-equivalent. 

The topological $\cA$-equivalence implies that the cardinality of $S$ coincides with the cardinality of $S'$. Since $\psi(\Sigma)=\Sigma'$, we deduce that $\phi(D)=D'$. Being $D$ and $D'$ plane curves, this implies by~\cite{Le73} that the collection of Milnor numbers of the multi-germ of plane curves $(D,S)$ coincides with that of $(D',S')$. 

Let $C$ and $\tilde C$ be the number of crosscaps of $f$ and $\tilde f$. Let $r=\vert S\vert$ and, for $j=1,\dots r$, let $f^{(j)},\tilde f^{(j)}$ and $\phi^{(j)}$ be the branches of $f,\tilde f$ and $\phi$, ordered in a way that $f^{(j)}$  and $\tilde f^{(j)}$ are topologically $\cA$-equivalent via $\phi^{(j)}$ and $\psi$. Observe that all $f^{(j)}$ and $\tilde f^{(j)}$ are $\cA$-finite, hence they have well defined numbers of crosscaps and triple points, respectively $C^{(j)},T^{(j)},\tilde C^{(j)}$ and $\tilde T^{(j)}$. Similarly, let $\F^{(j)},\tilde \F^{(j)},D^{(j)}$ and $\tilde D^{(j)}$ the corresponding Milnor fibres and double point curves. Since $f^{(j)}$ and $\tilde f^{(j)}$ are topologically $\cA$-equivalent, their images have the same embedded topological type. This implies the equality $\chi(\F^{(j)})=\chi(\tilde \F^{(j)})$ by~\cite{Le73}. By the $\cA$-equivalence of $f^{(j)}$ and $\tilde f^{(j)}$ we have that $D^{(j)}$ and $\tilde D^{(j)}$ have the same embedded topological type, and 
hence $\mu(D^{(j)})=\mu(\tilde D^{(j)})$. Comparing both Euler characteristics, we conclude that $2C^{(j)}-3T^{(j)}=2\tilde C^{(j)}-3\tilde T^{(j)}$, by virtue of Proposition~\ref{propEulerCharFormula}. On the other hand, Proposition \ref{propC-3TIsATopInv} states that  $C^{(j)}-3T^{(j)}=\tilde C^{(j)}-3\tilde T^{(j)}$. Both things together imply $C^{(j)}=\tilde C^{(j)}$ and $T^{(j)}=\tilde T^{(j)}$ and, since $C=\sum_{j=1}^{r}C^{(j)}$ and $\tilde C=\sum_{j=1}^{r}\tilde C^{(j)}$, we conclude that $C=\tilde C$.

So far we know that $\vert S\vert$, $\chi(\F)$, $C$ and the collection of Milnor numbers of the multi-germ $(D,S)$  depend only on the embedded topological type of $X$. That the same applies to $T,\mu_I$ and $\mu(\Sigma)$ follows immediately from the formulae $\chi(\F)=\mu(D)+2C-3T$, $\mu(\Sigma)=\frac{1}{2}(\mu(D)-C+2T-r+2)$ and $\mu_I=\mu(D)-\mu(\Sigma)-T$. \end{proof}

As a direct consequence, we obtain the following (see ~\cite{NP} for definitions):

\begin{cor}
The Smale invariant is a topological $\cA$-invariant for those immersions $j\colon S^3\looparrowright S^5$ associated to $\cA$-finite map germs $(\C^2,S)\to(\C^3,0)$. More strongly, it only dependes on the embedded topological type of the image of the immersion. 
\end{cor}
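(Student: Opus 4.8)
The plan is to read the statement off from Theorem~\ref{thmTopInv} together with the identification of the sign-refined Smale invariant established by N\'emethi and Pint\'er. An immersion $j\colon S^3\looparrowright S^5$ of the type considered comes from a single branch, so we may assume $f\colon(\C^2,0)\to(\C^3,0)$ is an $\cA$-finite mono-germ and that $j$ is its link immersion in the sense of~\cite{NP}: the restriction of a sufficiently small representative of $f$ to a small sphere $S^3\subseteq\C^2$, landing in a small sphere $S^5\subseteq\C^3$, with image the link pair of the germ $(X,0)$, $X=\im f$. (For an $\cA$-finite mono-germ the rank-drop locus of a small representative is contained in $\{0\}$, so $j$ really is an immersion, and $C<\infty$, so the N\'emethi--Pint\'er result applies.)

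First I would record, citing~\cite{NP}, that the sign-refined Smale invariant of $j$ equals the crosscap number $C$ of $f$; thus this invariant is encoded in the single integer $C$. Next, Theorem~\ref{thmTopInv} gives that $C$ depends only on the embedded topological type of $X$. Since $(X,0)\subseteq(\C^3,0)$ is an analytic germ, the conical structure theorem identifies its embedded topological type with that of the link pair $\bigl(S^5,\,X\cap S^5\bigr)$, that is, with the embedded topological type of the image $j(S^3)\subseteq S^5$. Chaining these three facts yields that the Smale invariant of $j$ depends only on the embedded topological type of the image of $j$, which is the ``more strongly'' part of the statement. Topological $\cA$-invariance then follows at once: topologically $\cA$-equivalent germs have ambient-homeomorphic images, hence images of the same embedded topological type, hence equal $C$, hence link immersions with the same Smale invariant.

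I do not expect a genuine obstacle: the argument is a formal concatenation of two results already at our disposal. The two points that deserve a word of care are the passage between the germ $(X,0)$ and its link pair --- which is the standard conical structure theorem --- and the fact that sameness of the embedded topological type of the image is at least as strong as topological $\cA$-equivalence; the latter, however, is precisely what is established at the beginning of the proof of Theorem~\ref{thmTopInv}, by exploiting that $f$ is the topological normalization of $X$ and invoking uniqueness of topological normalizations to lift a homeomorphism of images to one of sources.
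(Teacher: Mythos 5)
Your argument is exactly the one the paper intends: the corollary is stated there as a direct consequence of Theorem~\ref{thmTopInv} combined with the N\'emethi--Pint\'er identification of the sign-refined Smale invariant with the crosscap number $C$, and your chain (Smale invariant $=C$, $C$ determined by the embedded topological type of $X$, link pair versus germ via the conical structure theorem) fills in precisely those steps. The reduction to a single branch and the remark on lifting homeomorphisms via topological normalization are consistent with what is already done in the proof of Theorem~\ref{thmTopInv}, so there is nothing to correct.
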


% \begin{lem}[L\^e \cite{Le73}]\label{lemBettiMilnorFibreTopInv}
% The Betti numbers $b_i(\F)$ are topological invariants.
% \end{lem}

\end{document}